\providecommand{\U}[1]{\protect\rule{.1in}{.1in}}
\providecommand{\U}[1]{\protect\rule{.1in}{.1in}}
\providecommand{\U}[1]{\protect\rule{.1in}{.1in}} \textwidth 16.3cm
\theoremstyle{plain}
\newtheorem{theorem}{Theorem}[section]
\newtheorem{proposition}[theorem]{Proposition}
\newtheorem{corollary}[theorem]{Corollary}
\newtheorem{example}[theorem]{Example}
\newtheorem{lemma}[theorem]{Lemma}
\numberwithin{equation}{section}
\begin{document}
\title[Remarks on cotype absolutely summing multilinear operators]{Remarks on cotype and absolutely summing multilinear operators}
\author{A. Thiago Bernardino}
\address[A. Thiago Bernardino]{ UFRN/CERES - Centro de Ensino Superior do Serid\'{o},
Rua Joaquim Greg\'{o}rio, S/N, 59300-000, Caic\'{o}- RN, Brazil}
\email{thiagobernardino@yahoo.com.br}
\thanks{}
\thanks{Key words: absolutely $p$-summing multilinear operators, cotype}

\begin{abstract}
In this short note we present some new results concerning cotype and
absolutely summing multilinear operators.

\end{abstract}
\maketitle

\section{Introduction}

In this note the letters $X_{1},...,X_{n},X,Y$ will denote Banach spaces over
the scalar field $\mathbb{K}=\mathbb{R}$ or $\mathbb{C}$.

From now on the space of all continuous $n$-linear operators from $X_{1}%
\times\cdots\times X_{n}$ to $Y$ will be denoted by $\mathcal{L}%
(X_{1},...,X_{n};Y).$ If $1\leq s<\infty$, the symbol $s^{\ast}$ represents
the conjugate of $s$. It will be convenient to adopt that $s/\infty=0$ for any
$s>0.$ For $1\leq q<\infty,$ denote by $\ell_{q}^{w}(X)$ the set
$\{(x_{j})_{j=1}^{\infty}\subset X:\sup_{\varphi\in B_{X^{\ast}}}%
{\textstyle\sum\nolimits_{j}}
\left\vert \varphi(x_{j})\right\vert ^{q}<\infty\}.$

If $0<p,q_{1},...,q_{n}<\infty$ and
\[
\frac{1}{p}\leq\frac{1}{q_{1}}+\cdots+\frac{1}{q_{n}},
\]
a multilinear operator $T\in\mathcal{L}(X_{1},...,X_{n};Y)$ is
absolutely\emph{ }$(p;q_{1},...,q_{n})$-summing if
\[
(T(x_{j}^{(1)},...,x_{j}^{(n)}))_{j=1}^{\infty}\in\ell_{p}(Y)
\]
for every $(x_{j}^{(k)})_{j=1}^{\infty}\in\ell_{q_{k}}^{w}(X_{k}),k=1,...,n.$
In this case we write $T\in\Pi_{p,q_{1},...,q_{n}}^{n}\left(  X_{1}%
,...,X_{n};Y\right)  $. If $q_{1}=\cdots=q_{n}=q,$ we write $\Pi_{p,q}%
^{n}\left(  X_{1},...,X_{n};Y\right)  $ instead of $\Pi_{p,q,...,q}^{n}\left(
X_{1},...,X_{n};Y\right)  .$ For details on the linear theory we refer to the
excellent monograph \cite{djt} and for the multilinear theory we refer to
\cite{am, CD, panor} and references therein.

This paper deals with the connection between cotype and absolutely summing
multilinear operators; this line of investigation begins with \cite{Bot} and
was followed by several recent papers (we refer, for example, to \cite{gsd,
pp, df, MT, danielstudia, dd, popa5, popa} and for a full panorama we mention
\cite{panor}). The following result appears in \cite[Theorem 3 and Remark
2]{junek} and \cite[Corollary 4.6]{popa} (see also \cite[Theorem 3.8
(ii)]{gsd} for a particular case):

\begin{theorem}
[Inclusion Theorem]\label{tt} Let $X_{1},...,X_{n}$ be Banach spaces with
cotype $s$ and $n\geq2$ be a positive integer:

(i) If $s=$ $2,$ then
\[
\Pi_{q;q}^{n}(X_{1},...,X_{n};Y)\subseteqq\Pi_{p;p}^{n}(X_{1},...,X_{n};Y)
\]
holds true for $1\leq p\leq q\leq2$.

(ii) If $s>2,$ then
\[
\Pi_{q;q}^{n}(X_{1},...,X_{n};Y)\subseteqq\Pi_{p;p}^{n}(X_{1},...,X_{n};Y)
\]
holds true for $1\leq p\leq q<s^{\ast}<2$.
\end{theorem}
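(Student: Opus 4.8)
The plan is to prove both inclusions by the standard "coincidence plus inclusion" strategy that underlies most results of this type, reducing the multilinear statement to a linear/diagonal argument driven by the cotype hypothesis.

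First I would recall the abstract inclusion principle for absolutely summing multilinear operators: if $1\le p_1\le p_2$ and $1\le q_1\le q_2$ satisfy $\tfrac{1}{p_1}-\tfrac{1}{q_1}\le \tfrac{1}{p_2}-\tfrac{1}{q_2}$ (equivalently $\tfrac1{p_2}-\tfrac1{p_1}\le \tfrac1{q_2}-\tfrac1{q_1}$), then every $(p_1;q_1)$-summing $n$-linear map is $(p_2;q_2)$-summing. Here however both indices move together ($q\mapsto p$ on both sides), so this monotonicity alone does not give the claim; the cotype of the domain spaces must be exploited. The correct engine is the cotype-based coincidence/summing result: if $X_k$ has cotype $s$, then there is a gain in the summing exponent governed by $s$. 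Concretely, for cotype $s=2$ one has the optimal improvement allowing one to pass from $q$-summing to $p$-summing with $1\le p\le q\le 2$, and for cotype $s>2$ the admissible range shrinks to $1\le p\le q<s^\ast$ because of the endpoint restriction $s^\ast<2$.

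The key steps I would carry out are: (1) Fix $T\in\Pi^n_{q;q}(X_1,\dots,X_n;Y)$ and sequences $(x_j^{(k)})_j\in\ell_p^w(X_k)$; since $p\le q$ we have $\ell_p^w(X_k)\subseteq\ell_q^w(X_k)$, so $T$ applied to these sequences already lands in $\ell_q(Y)$, and the whole problem is to upgrade $\ell_q(Y)$-summability to $\ell_p(Y)$-summability. (2) Invoke the cotype hypothesis on each $X_k$: by the Maurey--type / cotype inequality relating weakly-$p$-summable sequences to Rademacher averages, I would estimate $\left(\sum_j \|T(x_j^{(1)},\dots,x_j^{(n)})\|^p\right)^{1/p}$ by integrating against Rademacher functions and pulling the cotype constant of each coordinate through the multilinear form, as in \cite{Bot, junek, popa}. (3) Track the indices carefully to see that the passage is legitimate exactly when $p\le q\le 2$ (case $s=2$) or $p\le q<s^\ast<2$ (case $s>2$), the constant $s^\ast$ entering precisely from the cotype-$s$ Rademacher estimate. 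I would then assemble these bounds into the required norm inequality, concluding $T\in\Pi^n_{p;p}$.

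I expect the main obstacle to be the careful bookkeeping of exponents in the Rademacher/cotype averaging step, in particular verifying that the admissible range is sharp and that the two hypotheses ($s=2$ versus $s>2$) produce exactly the stated upper limits ($q\le 2$ versus $q<s^\ast$). The essential analytic content — the cotype inequality controlling $\ell_p$-norms of the image sequence by weak $\ell_p$-norms of the inputs — is already available in the cited works \cite{junek, popa}, so the real work is to organize the multilinear estimate so that each coordinate contributes its cotype constant independently and the resulting exponent condition matches the two regimes. Since the statement is attributed to \cite{junek} and \cite{popa}, I would ultimately present this as a short, self-contained reorganization of their arguments rather than a genuinely new proof.
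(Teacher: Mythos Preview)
The paper does not prove Theorem~\ref{tt}: it is quoted from \cite{junek} and \cite{popa} as background and motivation for the new results in Section~2. So there is no in-paper proof to compare against, and your closing remark --- that you would ultimately be reorganizing the arguments of those references --- is precisely what the paper itself does.

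That said, the machinery the paper develops in Section~2 does give an alternative route to (most of) Theorem~\ref{tt}, and it is \emph{not} the direct Rademacher-averaging argument you sketch. The paper's scheme is: (a) the coincidence Lemma~\ref{lem} yields $\mathcal{L}=\Pi^n_{1;1,\dots,1}$ whenever $\sum_j 1/s_j\ge 1$ (in particular when all $s_j=2$ and $n\ge 2$); and (b) Proposition~\ref{prop} then interpolates between this coincidence and the hypothesis $T\in\Pi^n_{q;q}$ to land in $\Pi^n_{p;p}$ for every $1\le p\le q$; see Corollary~\ref{C}. Ironically, your opening phrase ``coincidence plus inclusion'' describes this interpolation route better than your own steps (1)--(3) do.

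Your sketch, by contrast, is vague exactly where the work lies. Step~(1) misframes the problem: the task is not to ``upgrade $\ell_q(Y)$-summability to $\ell_p(Y)$-summability'' of a fixed image sequence, but to control the $\ell_p$-norm of the outputs by the weak $\ell_p$-norms of the inputs --- the extra information that the inputs lie in $\ell_p^w$ rather than merely $\ell_q^w$ must be used, not thrown away. Step~(2), ``pulling the cotype constant of each coordinate through the multilinear form,'' is the entire content of the theorem, and you give no concrete mechanism; a bare Rademacher average of $T(r_j(t)x_j^{(1)},\dots,r_j(t)x_j^{(n)})$ does not separate into coordinatewise cotype estimates without further structure. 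As written, this is a plan to consult \cite{junek, popa} rather than a proof.
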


As a consequence of results from \cite{Blasco} one can easily prove the
following generalization of this result (see \cite{th} for details):

\begin{theorem}
\label{klmm}If $X_{1}$ has cotype $2$ and $1\leq p\leq s\leq2,$ then
\[
\Pi_{(s;s,q,...,q)}^{n}(X_{1},...,X_{n};Z)\subseteqq\Pi_{(p;p,q,....,q)}%
^{n}(X_{1},...,X_{n};Z)
\]
for all $X_{2},...,X_{n}$,$Z$ and all $q\geq1.$ In particular%
\begin{equation}
\Pi_{s;s}^{n}(X_{1},...,X_{n};Z)\subseteqq\Pi_{(p;p,s,....,s)}^{n}%
(X_{1},...,X_{n};Z)\subseteqq\Pi_{p;p}^{n}(X_{1},...,X_{n};Z). \label{uuu}%
\end{equation}
A similar result, mutatis mutandis, holds if $X_{j}$ (instead of $X_{1}$) has
cotype $2.$
\end{theorem}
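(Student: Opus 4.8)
The plan is to prove the inclusion theorem by reducing to the known linear-theory inequalities of Blasco type. The key point is that cotype $2$ of the first variable $X_1$ provides, for any finitely many vectors $(x_j^{(1)})_{j=1}^m \in \ell_{s}^{w}(X_1)$, control of the $\ell_p$-norm of the scalars in terms of the weak $\ell_s$-norm, whenever $1\le p\le s\le 2$. This is exactly the mechanism behind the linear inclusion $\Pi_{s;s}\subseteq\Pi_{p;p}$ for cotype-$2$ spaces, and I would lift it to the multilinear setting by freezing the remaining variables.

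First I would take $T\in\Pi_{(s;s,q,\dots,q)}^{n}(X_1,\dots,X_n;Z)$ and pick arbitrary sequences $(x_j^{(1)})_j\in\ell_p^w(X_1)$ and $(x_j^{(k)})_j\in\ell_q^w(X_k)$ for $k=2,\dots,n$; since $\ell_p^w(X_1)\subseteq\ell_s^w(X_1)$ for $p\le s$, these are admissible inputs for $T$ as an $(s;s,q,\dots,q)$-summing operator, giving $(T(x_j^{(1)},\dots,x_j^{(n)}))_j\in\ell_s(Z)$. The goal is to upgrade this from $\ell_s$ to $\ell_p$. The decisive step is to invoke the Blasco-type result: because $X_1$ has cotype $2$ and $1\le p\le s\le 2$, the identity-like inclusion at the level of the first coordinate lets one absorb the passage from weak-$\ell_s$ to weak-$\ell_p$ data, so that summability in $\ell_s$ in fact forces summability in $\ell_p$. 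I would make this precise by applying the linear cotype argument uniformly to the family of linear maps $x\mapsto T(x,x_j^{(2)},\dots,x_j^{(n)})$, keeping track of the weak-$\ell_q$ contributions of the frozen variables.

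For the displayed chain \eqref{uuu}, I would read off the first inclusion by specializing $q=s$ in the main statement, which turns $\Pi_{s;s}^n$ into $\Pi_{(s;s,s,\dots,s)}^n$ on the left and gives $\Pi_{(p;p,s,\dots,s)}^n$ on the right. The second inclusion $\Pi_{(p;p,s,\dots,s)}^n\subseteq\Pi_{p;p}^n$ then follows from the elementary monotonicity $\ell_p^w(X_k)\subseteq\ell_s^w(X_k)$ valid for $p\le s$: an operator that is summing for the weaker (larger-exponent) hypotheses on coordinates $2,\dots,n$ is a fortiori summing when those coordinates are fed the smaller space $\ell_p^w$. Finally, the concluding remark about $X_j$ in place of $X_1$ is immediate by symmetry, permuting the roles of the variables.

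The main obstacle I anticipate is the uniformity of the Blasco-type estimate across the frozen variables: one must ensure that the constant coming from the cotype-$2$ inclusion does not degenerate as the auxiliary weak-$\ell_q$ sequences vary, and that the linear inequality can be applied simultaneously rather than pointwise in $j$. Making this legitimate typically requires either a vector-valued formulation of Blasco's result or a careful interchange of the summation over $j$ with the supremum defining the weak norms; handling this interchange cleanly is where the technical care lies, and it is precisely what the reference \cite{Blasco} is expected to supply.
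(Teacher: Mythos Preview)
The paper does not actually prove this theorem in the text; it only asserts that the result follows from the lifting machinery of \cite{Blasco} and points to \cite{th} for the details. Your outline is therefore already as detailed as what the paper itself supplies, and it aims in the same direction: use that cotype $2$ of $X_{1}$ yields the linear inclusion $\Pi_{s;s}(X_{1};\cdot)\subseteq\Pi_{p;p}(X_{1};\cdot)$ for $1\le p\le s\le 2$, and then lift this coordinatewise improvement to the multilinear class via the results of Blasco, Botelho, Pellegrino and Rueda. Your derivation of the chain \eqref{uuu} from the main inclusion (specialize $q=s$, then use $\ell_{p}^{w}\subseteq\ell_{s}^{w}$ in the remaining slots) is exactly right.

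One caution on your heuristic step. The picture of applying the linear inclusion to the family of maps $x\mapsto T(x,x_{j}^{(2)},\dots,x_{j}^{(n)})$ does not quite work as written: for each fixed $j$ that linear map is evaluated at the single vector $x_{j}^{(1)}$, not at a sequence, so the linear statement $\Pi_{s;s}\subseteq\Pi_{p;p}$ has nothing to act on pointwise in $j$. You correctly identify this uniformity issue as the crux. The lifting theorem in \cite{Blasco} is set up precisely to bypass the freezing picture: a summability improvement enjoyed by $\mathrm{id}_{X_{1}}$ (here, the cotype-$2$ inequality between weak $\ell_{s}$ and weak $\ell_{p}$ norms in the range $1\le p\le s\le 2$) is transferred wholesale to the first slot of any $(s;s,q,\dots,q)$-summing multilinear operator, with the other coordinates untouched. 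Once that black box is invoked, the argument is complete exactly as you sketch.
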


In this note we remark that analogous results hold for other situations in
which the spaces involved may have different cotypes and no space may have
necessarily cotype $2$.

\section{Results}

The following proposition can be found in \cite{gsd}:

\begin{proposition}
\label{prop}Let $1\leq p_{1},...,p_{n},p,q_{1},...,q_{n},q\leq\infty$ such
that $1/t\leq\sum_{j=1}^{n}1/t_{j}$ for $t\in\left\{  p,q\right\}  .$ Let
$0<\theta<1$ and define%
\[
\frac{1}{r}=\frac{1-\theta}{p}+\frac{\theta}{q}\text{ and }\frac{1}{r_{j}%
}=\frac{1-\theta}{p_{j}}+\frac{\theta}{q_{j}}\text{ for all }j=1,...,n
\]
and let $T\in\mathcal{L}\left(  X_{1},...,X_{n};Y\right)  .$ Then%
\[
T\in\Pi_{p;p_{1},...,p_{n}}^{n}\cap\Pi_{q;q_{1},...,q_{n}}^{n}\text{ implies
}T\in\Pi_{r;r_{1},...,r_{n}}^{n},
\]
provided that for each $j=1,...,n,$ one of the following conditions holds:

$\left(  i\right)  $ $X_{j}$ is an $\mathcal{L}_{\infty}$-space;

$\left(  ii\right)  $ $X_{j}$ is of cotype $2$ and $1\leq p_{j},q_{j}\leq2$;

$\left(  iii\right)  $ $X_{j}$ is of finite cotype $s_{j}>2$ and $1\leq
p_{j},q_{j}<s_{j}^{\ast}$;

$\left(  iv\right)  $ $p_{j}=q_{j}=r_{j}.$
\end{proposition}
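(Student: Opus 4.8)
The plan is to recast the two summing hypotheses as boundedness of a single induced $n$-linear map between vector-valued sequence spaces, and then to interpolate that map factor by factor; the cotype conditions $(i)$--$(iv)$ will enter only at the very last step, to control the behaviour of the \emph{domains} under interpolation.

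First I would record the standard reformulation: by a closed graph (equivalently, uniform boundedness) argument, the hypothesis $T\in\Pi_{p;p_1,\dots,p_n}^n$ is equivalent to the assertion that the associated $n$-linear map
\[
\widehat{T}\colon\big((x_j^{(1)})_j,\dots,(x_j^{(n)})_j\big)\longmapsto\big(T(x_j^{(1)},\dots,x_j^{(n)})\big)_j
\]
is well defined and bounded from $\ell_{p_1}^{w}(X_1)\times\cdots\times\ell_{p_n}^{w}(X_n)$ into $\ell_p(Y)$; likewise $T\in\Pi_{q;q_1,\dots,q_n}^n$ says that the \emph{same} map $\widehat{T}$ is bounded from $\ell_{q_1}^{w}(X_1)\times\cdots\times\ell_{q_n}^{w}(X_n)$ into $\ell_q(Y)$. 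Thus $\widehat{T}$ is one $n$-linear map, bounded with respect to two different scales of sequence spaces, one indexed by the $p$'s and one by the $q$'s.

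Second, I would invoke the multilinear version of Calder\'on's complex interpolation theorem: if an $n$-linear map is bounded $\prod_{k}A_k^0\to B^0$ and $\prod_{k}A_k^1\to B^1$, then it is bounded $\prod_{k}[A_k^0,A_k^1]_\theta\to[B^0,B^1]_\theta$ for every $0<\theta<1$. Applied to $\widehat{T}$ this yields
\[
\widehat{T}\colon\prod_{k=1}^n\big[\ell_{p_k}^{w}(X_k),\ell_{q_k}^{w}(X_k)\big]_\theta\longrightarrow\big[\ell_p(Y),\ell_q(Y)\big]_\theta .
\]
On the target side the identification is the classical scalar-type one, $[\ell_p(Y),\ell_q(Y)]_\theta=\ell_r(Y)$ with $1/r=(1-\theta)/p+\theta/q$, so the codomain is exactly the space we want.

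Third, and this is the heart of the matter, I would compare the interpolated domains with the plain weak sequence spaces $\ell_{r_k}^{w}(X_k)$. In the purely scalar situation one has the exact identity $[\ell_{p_k},\ell_{q_k}]_\theta=\ell_{r_k}$, but for \emph{vector-valued} weak sequence spaces this need not persist, and the sole purpose of the hypotheses $(i)$--$(iv)$ is to force, for each $k$, the continuous inclusion
\[
\ell_{r_k}^{w}(X_k)\subseteq\big[\ell_{p_k}^{w}(X_k),\ell_{q_k}^{w}(X_k)\big]_\theta .
\]
Case $(iv)$ is immediate, the couple being constant. For $(i)$--$(iii)$ I would use the identification $\ell_{q}^{w}(X)\cong\mathcal{L}(\ell_{q^{*}};X)$ (with $\mathcal{L}(c_0;X)$ when $q=1$) together with the interpolation results of \cite{Blasco}: there the cotype hypotheses ($X_k$ an $\mathcal{L}_\infty$-space in $(i)$, cotype $2$ with $1\le p_k,q_k\le2$ in $(ii)$, finite cotype $s_k>2$ with $1\le p_k,q_k<s_k^{*}$ in $(iii)$) are precisely what make the $X_k$-valued weak $\ell$-scale interpolate as in the scalar case. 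Feeding any weakly $r_k$-summable sequence through this inclusion and then through the interpolated bound for $\widehat{T}$ produces $(T(x_j^{(1)},\dots,x_j^{(n)}))_j\in\ell_r(Y)$, i.e.\ $T\in\Pi_{r;r_1,\dots,r_n}^n$, as claimed. The main obstacle is exactly this third step: since interpolation of vector-valued weak sequence spaces fails in general, the factors cannot be handled naively, and everything rests on the cotype-dependent inclusions of \cite{Blasco}. The multilinear interpolation in the second step, though standard, also needs a little care, as the complex method requires compatible couples; I would therefore first fix a Banach-couple structure on each pair $\ell_{p_k}^{w}(X_k)$, $\ell_{q_k}^{w}(X_k)$ by embedding both into a common Hausdorff space of $X_k$-valued sequences before applying the interpolation theorem.
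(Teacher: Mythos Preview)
The paper does not actually prove this proposition; it is simply quoted from \cite{gsd}. Your outline is correct and is exactly the strategy carried out in \cite{gsd}: rewrite each summing hypothesis as boundedness of the induced map $\widehat{T}$ between vector-valued sequence spaces, apply multilinear complex interpolation, identify $[\ell_p(Y),\ell_q(Y)]_\theta=\ell_r(Y)$ on the target, and on each domain factor use the hypotheses $(i)$--$(iv)$ to secure the inclusion $\ell_{r_k}^{w}(X_k)\hookrightarrow[\ell_{p_k}^{w}(X_k),\ell_{q_k}^{w}(X_k)]_\theta$. The only adjustment is bibliographic: those domain-side interpolation inclusions for weak sequence spaces under the stated cotype/$\mathcal{L}_\infty$ assumptions are the content of \cite{gsd} itself, not of \cite{Blasco}.
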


Next lemma appears in \cite[Theorem $3.1$]{dd} without proof. We present a
proof for the sake of completeness:

\begin{lemma}
\label{lem}Let $s>0$. Suppose that $X_{j}$ has cotype $s_{j}$ for all
$j=1,...,n$ and at least one of the $s_{j}$ is finite. If%
\[
\frac{1}{s}\leq\frac{1}{s_{1}}+\ldots+\frac{1}{s_{n}},
\]
then
\[
\mathcal{L}\left(  X_{1},...,X_{n};Y\right)  =\Pi_{s;b_{1},...,b_{n}}%
^{n}\left(  X_{1},...,X_{n};Y\right)
\]
for
\[
b_{j}=1\text{ if }s_{j}<\infty\text{ and }b_{j}=\infty\text{ if }s_{j}%
=\infty.
\]

\end{lemma}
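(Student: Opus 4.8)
The plan is to reduce the multilinear statement to a scalar summability question and then feed in cotype through the identity operator. First I would fix $T\in\mathcal{L}(X_{1},\dots,X_{n};Y)$ together with sequences $(x_{j}^{(k)})_{j}\in\ell_{b_{k}}^{w}(X_{k})$ for $k=1,\dots,n$, and use the boundedness of $T$ to write
\[
\|T(x_{j}^{(1)},\dots,x_{j}^{(n)})\|\le\|T\|\prod_{k=1}^{n}\|x_{j}^{(k)}\|
\]
for every $j$. Hence it suffices to show that the scalar sequence $\big(\prod_{k=1}^{n}\|x_{j}^{(k)}\|\big)_{j}$ lies in $\ell_{s}$, so the whole problem becomes one about the norm sequences $a_{j}^{(k)}:=\|x_{j}^{(k)}\|$.

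The key ingredient, and the step I expect to require the most care, is the translation of cotype into a summability property of these norm sequences. For each $k$ with $s_{k}<\infty$ (so $b_{k}=1$) I would recall that cotype $s_{k}$ of $X_{k}$ is exactly the assertion that the identity of $X_{k}$ is absolutely $(s_{k};1)$-summing. Concretely, combining the cotype inequality with the elementary estimate
\[
\Big\|\sum_{j}r_{j}x_{j}\Big\|_{L_{2}(X_{k})}\le\Big\|\sum_{j}r_{j}x_{j}\Big\|_{L_{\infty}(X_{k})}=\sup_{\varepsilon_{j}=\pm1}\Big\|\sum_{j}\varepsilon_{j}x_{j}\Big\|=\|(x_{j})\|_{\ell_{1}^{w}(X_{k})},
\]
where the $r_{j}$ are the Rademacher functions, yields a constant $C_{k}$ with $\big(\sum_{j}\|x_{j}\|^{s_{k}}\big)^{1/s_{k}}\le C_{k}\,\|(x_{j})\|_{\ell_{1}^{w}(X_{k})}$ for all finite sequences, and therefore $(a_{j}^{(k)})_{j}\in\ell_{s_{k}}$. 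For each $k$ with $s_{k}=\infty$ (so $b_{k}=\infty$), membership in $\ell_{\infty}^{w}(X_{k})$ gives directly $\sup_{j}a_{j}^{(k)}<\infty$, i.e. $(a_{j}^{(k)})_{j}\in\ell_{\infty}$. The passage from finite sequences to the infinite case is routine, since the cotype constants are uniform.

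Finally I would combine the factors by the generalized H\"older inequality. Setting $1/u:=\sum_{k=1}^{n}1/s_{k}$ (a strictly positive number, since the hypothesis $1/s\le\sum_{k}1/s_{k}$ already forces at least one $s_{k}$ to be finite), H\"older with exponents $s_{k}/u$ gives
\[
\Big(\sum_{j}\prod_{k=1}^{n}(a_{j}^{(k)})^{u}\Big)^{1/u}\le\prod_{k=1}^{n}\Big(\sum_{j}(a_{j}^{(k)})^{s_{k}}\Big)^{1/s_{k}}<\infty,
\]
with the convention that a factor with $s_{k}=\infty$ is read as $\sup_{j}a_{j}^{(k)}$. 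Thus $\big(\prod_{k}a_{j}^{(k)}\big)_{j}\in\ell_{u}$. The hypothesis $1/s\le1/u$ means $u\le s$, so $\ell_{u}\subseteq\ell_{s}$ and hence $\big(\prod_{k}a_{j}^{(k)}\big)_{j}\in\ell_{s}$. Combined with the domination from the first paragraph, this gives $(T(x_{j}^{(1)},\dots,x_{j}^{(n)}))_{j}\in\ell_{s}(Y)$, which is precisely the asserted equality $\mathcal{L}(X_{1},\dots,X_{n};Y)=\Pi_{s;b_{1},\dots,b_{n}}^{n}(X_{1},\dots,X_{n};Y)$ (the reverse inclusion being trivial).
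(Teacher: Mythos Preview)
Your argument is correct and follows essentially the same route as the paper: bound $\|T(x_j^{(1)},\dots,x_j^{(n)})\|$ by $\|T\|\prod_k\|x_j^{(k)}\|$, apply the generalized H\"older inequality with exponents $s_k$ to the norm sequences, and invoke cotype $s_k$ to conclude that $(\|x_j^{(k)}\|)_j\in\ell_{s_k}$ whenever $s_k<\infty$ (the paper phrases this last step as the identity on $X_{j_t}$ being $(s_{j_t};1)$-summing, which is exactly what you unpack with the Rademacher estimate). The only cosmetic difference is that you first land in $\ell_u$ with $1/u=\sum_k 1/s_k$ and then use $\ell_u\subseteq\ell_s$, whereas the paper absorbs this into a single application of H\"older; the content is identical.
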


\begin{proof}
Let $j_{1},...,j_{k}\in\left\{  1,...,n\right\}  ,k\leq n$ such that
$s_{j_{1}},...,s_{j_{k}}$ are finite and $s_{j}=\infty$ if $j\not =%
j_{1},...,j_{k}.$

If $\left(  x_{i}^{\left(  j_{l}\right)  }\right)  _{i=1}^{\infty}\in
l_{1}^{w}(X_{j_{l}})$ and $(x_{i}^{\left(  j\right)  })_{i=1}^{\infty}\in
l_{\infty}(X_{j}),j\not =j_{l}$, $l=1,...,k,$ using Generalized H\"{o}lder
Inequality, we obtain%
\begin{align*}
\left(
{\textstyle\sum\limits_{i=1}^{\infty}}
\left\Vert T(x_{i}^{\left(  1\right)  },...,x_{i}^{\left(  n\right)
})\right\Vert ^{s}\right)  ^{\frac{1}{s}}  &  \leq\left\Vert T\right\Vert
\left(
{\textstyle\sum\limits_{i=1}^{\infty}}
\left(  \left\Vert x_{i}^{\left(  1\right)  }\right\Vert \cdots\left\Vert
x_{i}^{\left(  n\right)  }\right\Vert \right)  ^{s}\right)  ^{\frac{1}{s}}\\
&  \leq C\left\Vert T\right\Vert \left(
{\textstyle\sum\limits_{i=1}^{\infty}}
\left\Vert x_{i}^{\left(  j_{1}\right)  }\right\Vert ^{s_{j_{1}}}\right)
^{1/s_{j_{1}}}\cdots\left(
{\textstyle\sum\limits_{i=1}^{\infty}}
\left(  \left\Vert x_{i}^{\left(  j_{k}\right)  }\right\Vert \right)
^{s_{j_{k}}}\right)  ^{1/s_{j_{k}}}%
\end{align*}
where $C$ is such that
\[
\prod_{j=1,j\not =j_{1},...,j_{n}}^{n}\left\Vert x_{i}^{\left(  j\right)
}\right\Vert \leq C
\]
for all $i.$ Since $X_{j}$ has cotype $s_{j},$ for $j_{1},...,j_{k},$ we have%
\begin{align*}
\left(
{\textstyle\sum\limits_{i=1}^{\infty}}
\left\Vert T(x_{i}^{\left(  1\right)  },...,x_{i}^{\left(  n\right)
})\right\Vert ^{s}\right)  ^{\frac{1}{s}}  &  \leq C\left\Vert T\right\Vert
\left(
{\textstyle\sum\limits_{i=1}^{\infty}}
\left\Vert x_{i}^{\left(  j_{1}\right)  }\right\Vert ^{s_{j_{1}}}\right)
^{1/s_{j_{1}}}\cdots\left(
{\textstyle\sum\limits_{i=1}^{\infty}}
\left(  \left\Vert x_{i}^{\left(  j_{k}\right)  }\right\Vert \right)
^{s_{j_{k}}}\right)  ^{1/s_{j_{k}}}\\
&  =C\left\Vert T\right\Vert \prod_{t=1}^{k}\left(
{\textstyle\sum\limits_{i=1}^{\infty}}
\left\Vert id_{Xj_{_{t}}}\left(  x_{i}^{\left(  j_{t}\right)  }\right)
\right\Vert ^{s_{j_{t}}}\right)  ^{1/s_{j_{t}}}<\infty
\end{align*}
and the result follows.
\end{proof}

The main result of this note is the following Theorem. At first glance it
seems to have too restrictive assumptions, but Corollary \ref{C} and Example
\ref{E} will illustrate its usefulness:

\begin{theorem}
\label{teo}Let $s\geq1$ and $n,k$ be positive integers with $k\leq n$. If
$X_{j}$ has finite cotype $s_{j}\geq2$ for $j=1,...,k$, then
\[
\Pi_{p;p_{1},...,p_{k},q,...,q}^{n}\left(  X_{1},...,X_{n};Y\right)
\subseteqq\Pi_{r;r_{1},...,r_{k},q,...,q}^{n}\left(  X_{1},...,X_{n};Y\right)
\]
for any $(q,\theta)\in\lbrack1,\infty]\times(0,1),$%
\begin{align*}
1 &  \leq p_{j}\leq2\text{ (when }s_{j}=2\text{)},\\
1 &  \leq p_{j}<s_{j}^{\ast}\text{ (when }s_{j}>2\text{)}%
\end{align*}
and%
\begin{align*}
\text{ }\frac{1}{s} &  \leq\frac{1}{s_{1}}+\cdots+\frac{1}{s_{k}},\\
\frac{1}{r} &  =\frac{1-\theta}{s}+\frac{\theta}{p},\\
\frac{1}{r_{j}} &  =\frac{1-\theta}{1}+\frac{\theta}{p_{j}},
\end{align*}
for all $j=1,...,k$.
\end{theorem}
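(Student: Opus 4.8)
The plan is to realize the desired inclusion as an interpolation, via Proposition \ref{prop}, between the given source class and a second ``endpoint'' class that \emph{every} continuous multilinear operator automatically belongs to. Matching the exponents in the statement against the interpolation formulas of Proposition \ref{prop}, one reads off that the target class $\Pi_{r;r_1,\ldots,r_k,q,\ldots,q}^n$ sits between the two endpoints
\[
\Pi_{s;1,\ldots,1,q,\ldots,q}^n \ \ (\text{weight }1-\theta)\qquad\text{and}\qquad \Pi_{p;p_1,\ldots,p_k,q,\ldots,q}^n \ \ (\text{weight }\theta),
\]
since $\frac1r=\frac{1-\theta}{s}+\frac{\theta}{p}$, $\frac1{r_j}=\frac{1-\theta}{1}+\frac{\theta}{p_j}$ for $j\le k$, and $\frac1{r_j}=\frac{1-\theta}{q}+\frac{\theta}{q}=\frac1q$ for $j>k$. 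The second endpoint is exactly the hypothesis $T\in\Pi_{p;p_1,\ldots,p_k,q,\ldots,q}^n$, so the whole argument reduces to showing that the first endpoint class coincides with $\mathcal{L}(X_1,\ldots,X_n;Y)$.

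First I would establish that endpoint identity using \lemref{lem}. Regarding $X_{k+1},\ldots,X_n$ as spaces of (trivial) cotype $\infty$, the hypothesis $\frac1s\le\frac1{s_1}+\cdots+\frac1{s_k}$ is precisely the condition $\frac1s\le\sum_{j=1}^n\frac1{s_j}$ of \lemref{lem} (the extra summands vanish), and at least one $s_j$ is finite since $k\ge1$. Hence \lemref{lem} gives $\mathcal{L}(X_1,\ldots,X_n;Y)=\Pi_{s;1,\ldots,1,\infty,\ldots,\infty}^n$, with weak-$\ell_1$ entries in the first $k$ coordinates and $\ell_\infty$ entries in the last $n-k$. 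Because $\ell_q^w(X)\subseteq\ell_\infty(X)$ for every $q$ and every $X$, any admissible input for $\Pi_{s;1,\ldots,1,q,\ldots,q}^n$ is admissible for $\Pi_{s;1,\ldots,1,\infty,\ldots,\infty}^n$; together with the trivial reverse inclusion this yields $\mathcal{L}(X_1,\ldots,X_n;Y)=\Pi_{s;1,\ldots,1,q,\ldots,q}^n$. In particular every $T\in\Pi_{p;p_1,\ldots,p_k,q,\ldots,q}^n$ lies in both endpoint classes.

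It then remains to verify the coordinatewise hypotheses of Proposition \ref{prop} for this pair of endpoints, so that interpolation at parameter $\theta$ produces exactly $\Pi_{r;r_1,\ldots,r_k,q,\ldots,q}^n$. For a coordinate $j\le k$ with $s_j=2$ I would invoke condition $(ii)$: $X_j$ has cotype $2$ and the relevant lower exponents $1$ and $p_j$ satisfy $1\le1\le2$ and $1\le p_j\le2$. For $j\le k$ with $s_j>2$ I would invoke $(iii)$: $X_j$ has finite cotype $s_j>2$, and $1\le1<s_j^\ast$ (which holds since $s_j<\infty$ forces $s_j^\ast>1$) together with the assumed $1\le p_j<s_j^\ast$. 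For a coordinate $j>k$ the lower exponent equals $q$ in both endpoints, hence $r_j=q$ and condition $(iv)$ applies. The index-compatibility requirements $\frac1t\le\sum_j\frac1{t_j}$ for $t\in\{s,p\}$ are inherited from the well-definedness of the two endpoint classes: for the first endpoint it is automatic, as $\frac1s\le1\le k$, and for the second it is the standing assumption built into writing $\Pi_{p;p_1,\ldots,p_k,q,\ldots,q}^n$. Applying Proposition \ref{prop} then gives the conclusion.

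The only genuinely delicate point is the verification that the lower endpoint is free, i.e.\ that $\Pi_{s;1,\ldots,1,q,\ldots,q}^n=\mathcal{L}(X_1,\ldots,X_n;Y)$; this is exactly where the finite-cotype hypotheses on $X_1,\ldots,X_k$ and the inequality $\frac1s\le\sum_{j=1}^k\frac1{s_j}$ enter, through \lemref{lem} and the elementary embedding $\ell_q^w\subseteq\ell_\infty$. Once this is in hand, the remainder is a bookkeeping exercise matching the interpolated exponents to the hypotheses $(ii)$--$(iv)$ of Proposition \ref{prop}.
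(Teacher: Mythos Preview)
Your argument is correct and follows exactly the paper's own route: use \lemref{lem} to obtain the free endpoint $\mathcal{L}(X_1,\ldots,X_n;Y)=\Pi_{s;1,\ldots,1,q,\ldots,q}^n$, then interpolate via Proposition~\ref{prop} between this and the hypothesis class to reach $\Pi_{r;r_1,\ldots,r_k,q,\ldots,q}^n$. The only difference is that you spell out in detail the verification of conditions $(ii)$--$(iv)$ of Proposition~\ref{prop}, which the paper leaves implicit.
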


\begin{proof}
Let $T\in\Pi_{p;p_{1},...,p_{k},q,...,q}^{n}\left(  X_{1},...,X_{n};Y\right)
.$ By the previous lemma,%
\[
\mathcal{L}\left(  X_{1},...,X_{n};Y\right)  =\Pi_{s;1,...,1,\infty
,...,\infty}^{n}\left(  X_{1},...,X_{n};Y\right)  ,
\]
where $1$ is repeated $k$ times. A fortiori, we have%
\[
\mathcal{L}\left(  X_{1},...,X_{n};Y\right)  =\Pi_{s;1,...,1,q,...,q}%
^{n}\left(  X_{1},...,X_{n};Y\right)  .
\]
So,%
\[
T\in\Pi_{s;1,...,1,q,...,q}^{n}\left(  X_{1},...,X_{n};Y\right)  \cap
\Pi_{p;p_{1},...,p_{k},q,...,q}^{n}\left(  X_{1},...,X_{n};Y\right)  .
\]

From Proposition \ref{prop} we get
\[
T\in\Pi_{r;r_{1},...,r_{k},q,...,q}^{n}\left(  X_{1},...,X_{n};Y\right)  .
\]

\end{proof}

\begin{corollary}
\label{C}Let $k$ be a natural number, $k\geq2$ and $q\in\lbrack1,\infty).$ If
$X_{j}$ has finite cotype $s_{j}\geq2,$ $j=1,...,k$ and $1\leq1/s_{1}%
+\cdots+1/s_{k},$ then%
\[
\Pi_{p;p_{1},...,p_{k},q,...,q}^{n}\left(  X_{1},...,X_{n};Y\right)
\subseteqq\Pi_{r;r_{1},...,r_{k},q,...,q}^{n}\left(  X_{1},...,X_{n};Y\right)
,
\]
where $p_{j}=p$ and $r_{j}=r$ for all $j=1,...,k$, for all $r$ so that
\begin{align*}
1 &  \leq r<p<\min s_{j}^{\ast}\text{ if }s_{j}\neq2\text{ for some
}j=1,...,k,\\
1 &  \leq r<p\leq2\text{ if }s_{j}=2\text{ for all }j=1,...,k.
\end{align*}
In particular%
\[
\Pi_{p;p}^{n}\left(  X_{1},...,X_{n};Y\right)  \subseteqq\Pi_{r;r}^{n}\left(
X_{1},...,X_{n};Y\right)
\]
for all $r$ so that
\begin{align*}
1 &  \leq r<p<\min s_{j}^{\ast}\text{ if }s_{j}\neq2\text{ for some
}j=1,...,k,\\
1 &  \leq r<p\leq2\text{ if }s_{j}=2\text{ for all }j=1,...,k.
\end{align*}

\end{corollary}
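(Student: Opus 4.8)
The plan is to derive Corollary \ref{C} directly from Theorem \ref{teo} by choosing the free parameters appropriately and then specializing. First I would observe that the hypotheses of the corollary are exactly a uniform instance of those in the theorem: we set all cotype-exponents to satisfy $1\le 1/s_1+\cdots+1/s_k$, so the admissible value $s=1$ is allowed in the constraint $1/s\le 1/s_1+\cdots+1/s_k$. Taking $s=1$ is the key simplification, because then the mixing exponent $r$ produced by the theorem becomes
\[
\frac{1}{r}=\frac{1-\theta}{1}+\frac{\theta}{p}=(1-\theta)+\frac{\theta}{p},
\]
and likewise each $r_j$ satisfies $1/r_j=(1-\theta)+\theta/p_j$. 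With the uniform choice $p_1=\cdots=p_k=p$ we get $r_1=\cdots=r_k=r$, so the two families of exponents in Theorem \ref{teo} collapse to the single pair $(r,p)$ appearing in the corollary.

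Next I would verify the range of $r$. Since $0<\theta<1$ and $s=1$, the quantity $1/r=(1-\theta)+\theta/p$ is a convex combination of $1$ and $1/p$; as $\theta$ ranges over $(0,1)$ this sweeps out exactly the open interval between $1/p$ and $1$, i.e. $r$ ranges over $(p,\ldots)$ reversed — more precisely, because $p\ge 1$ we have $1/p\le 1$, so $1/r$ lies strictly between $1/p$ and $1$, giving $1\le r<p$. Thus every $r$ with $1\le r<p$ is realized by a suitable $\theta\in(0,1)$, matching the corollary's hypothesis on $r$. The admissibility conditions on $p$ (namely $1\le p\le 2$ when all $s_j=2$, and $1\le p<\min s_j^\ast$ otherwise) are precisely the conditions $1\le p_j\le 2$ (for $s_j=2$) and $1\le p_j<s_j^\ast$ (for $s_j>2$) required by the theorem, once we impose the uniform choice $p_j=p$ and take the minimum over $j$.

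With these choices Theorem \ref{teo} yields exactly
\[
\Pi_{p;p_1,\ldots,p_k,q,\ldots,q}^{n}(X_1,\ldots,X_n;Y)\subseteqq\Pi_{r;r_1,\ldots,r_k,q,\ldots,q}^{n}(X_1,\ldots,X_n;Y),
\]
which, after substituting $p_j=p$ and $r_j=r$, is the first displayed inclusion of the corollary. For the ``In particular'' statement I would then specialize the outer-block exponent $q$ to equal $p$ on the left-hand side and $r$ on the right-hand side; since the claim is about $\Pi_{p;p}^n$ and $\Pi_{r;r}^n$ (all slots equal), this requires only that the chosen $q$-value be admissible, which it is for $q\in[1,\infty)$. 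The main obstacle I anticipate is purely bookkeeping: one must check carefully that setting $s=1$ is legitimate (it requires $1\le 1/s_1+\cdots+1/s_k$, which is exactly the corollary's standing assumption and is why $k\ge 2$ is imposed) and that the endpoint behavior of $\theta\to 0,1$ is correctly excluded so that the inclusion is stated for $r$ strictly less than $p$. No genuinely new estimate is needed; the entire content is a correct instantiation of the parameters of Theorem \ref{teo}.
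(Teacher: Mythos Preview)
Your derivation of the first displayed inclusion is correct and matches the paper's proof exactly: take $s=1$ in Theorem~\ref{teo} (legitimate precisely because of the hypothesis $1\le 1/s_1+\cdots+1/s_k$), set $p_j=p$, and observe that the resulting $r_j$ all coincide with $r$, with $r$ sweeping over $(1,p)$ as $\theta$ varies in $(0,1)$. (The endpoint $r=1$ is harmless, since it follows directly from Lemma~\ref{lem} with $s=1$.)

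The gap is in your ``in particular'' step. You write that you would ``specialize the outer-block exponent $q$ to equal $p$ on the left-hand side and $r$ on the right-hand side,'' but $q$ is a \emph{single} parameter in the already-established inclusion
\[
\Pi_{p;p,\ldots,p,q,\ldots,q}^{n}\subseteqq\Pi_{r;r,\ldots,r,q,\ldots,q}^{n},
\]
so you cannot assign it two different values simultaneously. Setting $q=p$ makes the left side equal to $\Pi_{p;p}^{n}$ but leaves the right side as $\Pi_{r;r,\ldots,r,p,\ldots,p}^{n}$, not $\Pi_{r;r}^{n}$. The paper closes this gap with one more (elementary) inclusion: since $r<p$ one has $\ell_{r}^{w}(X_j)\subseteq\ell_{p}^{w}(X_j)$, hence
\[
\Pi_{r;r,\ldots,r,p,\ldots,p}^{n}\subseteqq\Pi_{r;r,\ldots,r,r,\ldots,r}^{n}=\Pi_{r;r}^{n}.
\]
Chaining the two inclusions gives the desired $\Pi_{p;p}^{n}\subseteqq\Pi_{r;r}^{n}$. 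Once you insert this monotonicity step, your argument is complete and identical to the paper's.
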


\begin{proof}
Since $1\leq1/s_{1}+\cdots+1/s_{k},$ we can use $s=1$ in the previous theorem.
Since $p=p_{i}$ and $r=r_{i}$ for all $i=1,...,k$ and $s=1,$ we conclude that%
\[
\Pi_{p;p,...,p,q,...,q}^{n}\left(  X_{1},...,X_{n};Y\right)  \subseteqq
\Pi_{r;r,...,r,q,...,q}^{n}\left(  X_{1},...,X_{n};Y\right)  .
\]
In fact, for any $1\leq r<p$ there is a $\theta\in\lbrack0,1]$ so that%
\[
\frac{1}{r}=\frac{1-\theta}{1}+\frac{\theta}{p}%
\]
and since $p=p_{i}$ and $r=r_{i}$, the same $\theta\in\lbrack0,1]$ satisfies%
\[
\frac{1}{r_{i}}=\frac{1-\theta}{1}+\frac{\theta}{p_{i}}.
\]

Choosing $q=p,$ since $r<p$ we have
\[
\Pi_{p;p}^{n}\left(  X_{1},...,X_{n};Y\right)  \subseteqq\Pi
_{r;r,...,r,p,...,p}^{n}\left(  X_{1},...,X_{n};Y\right)  \subseteqq\Pi
_{r;r}^{n}\left(  X_{1},...,X_{n};Y\right)  .
\]
\bigskip
\end{proof}

\begin{example}
\label{E}Let $X_{4},...,X_{n},Y$ be arbitrary Banach spaces. Then%
\[
\Pi_{p;p,p,p,q,...,q}^{n}\left(  \ell_{3},\ell_{3},\ell_{3},X_{4}%
,...,X_{n};Y\right)  \subseteqq\Pi_{r;r,r,r,q,...,q}^{n}\left(  \ell_{3}%
,\ell_{3},\ell_{3},X_{4},,,.,X_{n};Y\right)
\]
for all $q\in\lbrack1,\infty)$ and $1\leq r<p<3^{\ast}.$ In particular%
\[
\Pi_{p;p}^{n}\left(  \ell_{3},\ell_{3},\ell_{3},X_{4},...,X_{n};Y\right)
\subseteqq\Pi_{r;r}^{n}\left(  \ell_{3},\ell_{3},\ell_{3},X_{4},,,.,X_{n}%
;Y\right)
\]
for all $1\leq r<p<3^{\ast}.$
\end{example}

\end{document}